\documentclass[a4paper,10pt]{amsart}
\usepackage[utf8]{inputenc}
\usepackage{amsfonts}
\usepackage{amsthm}
\usepackage{amsmath}
\usepackage{tikz}
\usepackage[new]{old-arrows}

\usetikzlibrary{cd, positioning, arrows.meta, decorations.pathmorphing, shapes.multipart,
decorations.markings, calc}

\usepackage{hyperref}

\newtheorem{theorem}{Theorem}[section]
\newtheorem{lemma}[theorem]{Lemma}
\theoremstyle{definition}
\newtheorem{example}[theorem]{Example}

\newtheorem{ntc}[theorem]{Notation}

\newcommand{\CC}{\mathbb{C}}
\newcommand{\RR}{\mathbb{R}}

\newcommand{\PP}{\mathbb{P}}

\newcommand{\ZZ}{\mathbb{Z}}

\numberwithin{equation}{section}

\title[Retraction of the complement of a smooth curve]{Retraction of the complement of smooth projective curves to a $2$-dimensional $\Delta$-complex.}
\author[E. Artal]{Enrique Artal Bartolo}
\address{Departamento de Matem\'{a}ticas, IUMA \\
Universidad de Zaragoza \\
C.~Pedro Cerbuna 12, 50009, Zaragoza, Spain}
\urladdr{\url{http://riemann.unizar.es/~artal}}
\email{\href{mailto:artal@unizar.es}{artal@unizar.es}}
\author[A. Larraya]{Alba Larraya Sancho}
\address{Basque Center for Applied Mathematics\\
Alameda de Mazarredo 14,
48009 Bilbao, Spain}
\email{\href{mailto:alarraya@bcamath.org}{alarraya@bcamath.org}}
\author[M.Á. Marco]{Miguel Á. Marco-Buzunáriz}
\address{Departamento de Matem\'{a}ticas, IUMA \\
Universidad de Zaragoza \\
C.~Pedro Cerbuna 12, 50009, Zaragoza, Spain}
\urladdr{\url{http://riemann.unizar.es/~mmarco}}
\email{\href{mailto:mmarco@unizar.es}{mmarco@unizar.es}}

\thanks{The first and third named authors are partially supported by  PID2024-156181NB-C33 funded by
MICIU/AEI/10.13039/501100011033. The second named author is partially supported by  PID2024-156181NB-C31 funded by
MICIU/AEI/10.13039/501100011033 and CEX2021-001142-S-20-9.}

\begin{document}

\begin{abstract}

Due to a result by Andreotti and Frankel \cite{andreotti1959}, it can be seen that the complement of a complex projective curve has the homotopy type of a $2$-dimensional CW complex. However, no general method has been given to compute explicitly this complex. Here we give a explicit construction of a $2$- dimensional $\Delta$-complex that is a strong deformation retract of the complement of a Fermat curve of degree $d$ in the complex projective space.

The retraction is performed in several steps, using the branched cover structure of the Fermat curves over the degree $1$ case.
\end{abstract}

\maketitle
\section{Definition of the problem}
Given $C$ an affine curve in $\CC^2$, Libgober gave in \cite{libgober1986} a construction of a $2-$ dimensional CW-complex that is a deformation retract of its complement. However, it is not clear how to extend that construction to the projective case (that is a curve in $\CC\PP^2$).

In this work, we give a different construction to produce a specific $\Delta$-complex that is a deformation retract of the complement of a Fermat curve (which is equivalent to the case of general smooth curves).

Our construction can be easily generalized to higher dimensions. We plan to write a more detailed work including that result in the near future.

\section{Smooth curves}
Given a smooth curve of degree $d$ in $\PP^2$, we know it can be moved to a curve of the form
\begin{equation*}
	C_d=\{[x:y:z]\in\CC\PP^2 | x^d+y^d+z^d=0\}
\end{equation*}
by an ambient isotopy.
Therefore, we will work with $C_d$, from now on.

We will denote:
\begin{eqnarray*}
	M_d&=&\{(x,y,z)\in\CC^3 | x^d+y^d+z^d=1\} \\
	S_d&=&\{(x,y,z)\in M_d | x^d, y^d, z^d\in\RR_{\geq0}\} \\
	K &=&\{[x:y:z]\in\CC\PP^2 \setminus M_2 | x^d, y^d, z^d\in\RR_{\geq0}\}
\end{eqnarray*}

We then have a commutative diagram:
\begin{equation*}
\begin{tikzcd}
S_d\rar[hookrightarrow]\dar& M_d\dar\\
K\rar[hookrightarrow] & \PP^2\setminus C_d
\end{tikzcd}
\end{equation*}
where the vertical maps consist on taking equivalence classes.

We are going to see that the inclusion $i:S_d\longrightarrow M_d$ induces a homotopy equivalence.

We will start by doing the construction for the case $d=1$, since the general case will be constructed on top of it.

\begin{ntc}
For a complex number $x\in\CC$, we denote its real and imaginary parts by $x_r,x_i$, respectively,
i.e. $x_ir,x_i\in\RR$ and $x=x_r + i x_i$. This notation also applies to vectors in $\CC^n$.
\end{ntc}


We will now define a strong retraction map
\begin{equation}
\begin{tikzcd}[row sep=0pt,/tikz/column 1/.append style={anchor=base east}, /tikz/column 2/.append style={anchor=base west}]
M_1\times{[0,1]}\rar["r_1"]&M_1\\
(P, t)\rar[mapsto]& t P_r + (1 - t) P_i=:r_{1t}(P),
\end{tikzcd}
\end{equation}
such that $r_{10}$ is the identity and $r_{11}(M_1)=M_1\cap\RR^3$
and the restriction of $r_{1t}$ to $M_1\cap\RR^3$ is the inclusion.


Note that for $(x,y,z)\in M_1$, we already have that
\begin{equation*}
\begin{array}{lll}
	x_r+y_r+z_r & = & 1 \\
	x_i+y_i+z_i & = & 0 \\
\end{array}
\end{equation*}
and therefore $r_1(x,y,z,t)\in M_1$ for all $t\in[0,1]$, and in particular, we get that $r_1(x,y,z,1)=(x_r,y_r,z_r)\in M_1\cap\RR^3$ for all $(x,y,z)\in M_1$.
Let us summarize it.

%

\begin{lemma}
The map $r_1$ is a strong deformation retraction from $M_1$ to $M_1\cap\RR^3$.
\end{lemma}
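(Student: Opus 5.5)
The plan is to check the defining properties of a strong deformation retraction directly: continuity of $r_1$, that $r_{10}=\mathrm{id}_{M_1}$, that $r_{1t}(M_1)\subset M_1$ for every $t$, that $r_{11}(M_1)\subset M_1\cap\RR^3$, and that $r_{1t}$ fixes $M_1\cap\RR^3$ pointwise for every $t$. All of these should come from the two linear identities $x_r+y_r+z_r=1$ and $x_i+y_i+z_i=0$ noted just before the statement.

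First I would fix the formula. As printed, $tP_r+(1-t)P_i$ is not the identity at $t=0$, since it returns the real vector $P_i$. It also does not stay in $M_1$, because its coordinates sum to $t$ rather than $1$. The homotopy the surrounding text clearly intends, matching the stated properties of $r_{10}$ and $r_{11}$, is
\begin{equation*}
r_{1t}(P)=P_r+i(1-t)P_i ,
\end{equation*}
which shrinks the imaginary part linearly to zero. I would carry out the proof with this map and state the correction explicitly.

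With this formula the checks are immediate.
\begin{itemize}
\item Continuity holds because $(P,t)\mapsto P_r+i(1-t)P_i$ is the restriction to $M_1\times[0,1]$ of a real-polynomial map $\CC^3\times\RR\to\CC^3$.
\item For $P=(x,y,z)\in M_1$, the coordinates of $r_{1t}(P)$ sum to $(x_r+y_r+z_r)+i(1-t)(x_i+y_i+z_i)=1+0=1$. Hence $r_{1t}(P)\in M_1$ for every $t$.
\item At $t=0$ we get $P_r+iP_i=P$, so $r_{10}=\mathrm{id}$.
\item At $t=1$ we get $P_r\in\RR^3$, which lies in $M_1$ by the previous point. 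So $r_{11}(M_1)\subset M_1\cap\RR^3$. Conversely, every real point is its own image, so $r_{11}(M_1)=M_1\cap\RR^3$.
\item If $P\in M_1\cap\RR^3$ then $P_i=0$, so $r_{1t}(P)=P$ for all $t$. This is the \emph{strong} condition.
\end{itemize}

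There is no real obstacle here. The only delicate point is noticing that the displayed formula must be read as $P_r+i(1-t)P_i$ (equivalently, that the convex combination should interpolate between $P$ and $P_r$), so that the hyperplane condition is preserved along the homotopy. One might also remark that $M_1\cap\RR^3$ is the real affine plane $x+y+z=1$, which is itself contractible. That remark is not needed here, but it is the fact on which the later $\Delta$-complex constructions for $S_d$ will build.
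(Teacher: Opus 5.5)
Your proof is correct and follows the paper's argument: both use the identities $x_r+y_r+z_r=1$ and $x_i+y_i+z_i=0$ to show that the homotopy stays in $M_1$, is the identity at $t=0$, lands in $M_1\cap\RR^3$ at $t=1$, and fixes real points. Your correction of the printed formula to $P_r+i(1-t)P_i$ is right, and it agrees with the form $\bar{r}(x,t)=x_r+(1-t)ix_i$ that the paper itself uses later when lifting $r_1$.
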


Now we will define a map $r_2:(M_1 \cap \RR^3)\times[0, 1]\to M_1 \cap \RR^3$
which will define a strong deformation retraction from $M_1 \cap \RR^3$ to $S_1$.
Note that $M_1\cap\RR^3$
is a plane in $\RR^3$ as in Figure~\ref{fig:triangulo}, and
\[
S_1=\{(x,y,z)\in\RR_{\geq0}^3|\,x+y+z=1\}=\{(x,y,z)\in[0,1]^3|\,x+y+z=1\},
\]
i.e, $S_1$ is the triangular section defined by the intersection of this plane with the first octant, delimited by the coordinated planes $\{x=0\}$, $\{y=0\}$, $\{z=0\}$. The intersection with these planes divides $M_1\cap\RR^3$ into seven parts, described in Figure~\ref{fig:triangulo}. We will retract each of them separately:

\begin{figure}[ht]
\begin{center}
\begin{tikzpicture}[scale=0.7,every text node part/.style={align=center}]
	\coordinate (A) at (-3,-4.5);
\coordinate (B) at (3,-4.5);
\coordinate (C) at (0,0);
\fill[green!25!white] (C) to (B) to (A);
	\draw ($2*(C)-(A)$) to ($2*(A)-(C)$);
	\draw ($2*(B)-(A)$) to ($2*(A)-(B)$);
	\draw ($2*(B)-(C)$) to ($2*(C)-(B)$);
	\node at (0,-2.5) {$(+,+,+)$};
	\node at (5.5,0) {$(-,+,+)$};
	\node at (7.5,-6.5) {$(-,+,-)$};
	\node at (0,-8.5) {$(+,+,-)$};
	\node at (-8,-6.5)  {$(+,-,-)$};
	\node at (-6,0) {$(+,-,+)$};
	\node at (0,4) {$(-,-,+)$};
	\foreach \x[evaluate={\y=1-\x}] in {.125, .25, ..., 1}
		{
\draw[->=.5,green!75!black] ($1.75*(C) -.75*\x*(A) -.75*\y*(B)$) -- (C);
\draw[->=.5,green!75!black] ($1.75*(B) -.75*\x*(A) -.75*\y*(C)$) -- (B);
\draw[->=.5,green!75!black] ($1.75*(A) -.75*\x*(C) -.75*\y*(B)$) -- (A);
}
	\foreach \x[evaluate={\y=1-\x}] in {0.25, .3125, ..., .75}
		{
\draw[->=.5,green!75!black] ($1.75*\x*(C) -.75*(A) + 1.75*\y*(B)$) -- ($\x*(C)+ \y*(B)$);
\draw[->=.5,green!75!black] ($1.75*\x*(A) -.75*(B) + 1.75*\y*(C)$) -- ($\x*(A)+ \y*(C)$);
\draw[->=.5,green!75!black] ($1.75*\x*(A) -.75*(C) + 1.75*\y*(B)$) -- ($\x*(A)+ \y*(B)$);
}
	\node[right=2pt] at (C) {(0,0,1)};
	\node[above right] at (B) {(0,1,0)};
	\node[above left] at (A) {(1,0,0)};
\end{tikzpicture}
\caption{Triangle $S_1$}
\label{fig:triangulo}
\end{center}
\end{figure}

We denote each section as $(\pm,\pm,\pm)$ indicating which coordinates are greater or equal to $0$ and which are lower or equal to $0$. For example, the triangle corresponding to $S_1$ would be $(+,+,+)$.
Note that there is no section $(-,-,-)$.

The retraction in $(+,+,+)$ is the identity for any value of $t$. For the rest of the parts, we define a retraction:
\begin{equation}
\begin{tikzcd}[row sep=0pt,/tikz/column 1/.append style={anchor=base east}, /tikz/column 2/.append style={anchor=base west}]
(M_1\cap\RR^3)\times{[0,1]}\rar["r_2"]&M_1\cap\RR^3\times [0,1]\\
(Q, t)\rar[mapsto]& t P_Q + (1 - t) P,
\end{tikzcd}
\end{equation}
where $Q_P$ is a point depending on $Q\in M_1\cap\RR^3$.

If $Q$ is in $(+,-,-)$, $(-,+,-)$, $(-,-,+)$, then $P_Q$ is $(1,0,0)$, $(0,1,0)$, $(0,0,1)$ respectively. If $Q$ is in $(+,+,-)$, $(+,-,+)$, $(-,+,+)$, then in order to choose $P_Q$ we take the line to the point that is $0$ everywhere except for the negative coordinate of our point, where it is $1$. Then $Q_P$ is the point of this line that intersects with the corresponding edge of the triangle. One can follow the
green lines in Figure~\ref{fig:triangulo}. Summarizing:
\begin{equation}
Q_{(x,y,z)}=\left\{\begin{array}{lr}
	(x,y,z)&(+,+,+) \\
	(1,0,0)&(+,-,-) \\
	(0,1,0)&(-,+,-) \\
	(0,0,1)&(-,-,+) \\
	(\frac{x}{1-z},\frac{y}{1-z},0)&(+,+,-) \\
	(\frac{x}{1-y},0,\frac{z}{1-y})&(+,-,+) \\
	(0,\frac{y}{1-x},\frac{z}{1-x})&(-,+,+) \\
\end{array}\right.
\end{equation}

Since we have defined the retraction by regions, we are now going to show that in the intersections the definitions coincide, and therefore that, by the gluing lemma, $r_2$ is countinous and well defined. We will see this for the part $(+,+,-)$, for the rest of permutations it is equivalent.

We start with the border between $(+,+,-)$ and $(+,+,+)$, this is, the line with $z=0$. In this case
\begin{equation*}
Q_{x,y,z}=\left\{\begin{array}{lr}
	(x,y,0)&(+,+,+) \\
	(\frac{x}{1-0},\frac{y}{1-0},0)=(x,y,0)&(+,+,-) \\
\end{array}\right.
\end{equation*}
For the border between $(+,+,-)$ and $(-,+,-)$, the border is the line $x=0$, so we get
\begin{equation*}
Q_{x,y,z}=\left\{\begin{array}{lr}
	(\frac{0}{1-z},\frac{y}{1-z},0)=(0,1,0)&(+,+,-) \\
	(0,1,0)&(-,+,-) \\
\end{array}\right.
\end{equation*}
since when  $x=0$ the equation of the plane is $y+z=1$ and then $y=1-z$. And lastly, the border between $(+,+,-)$ and $(+,-,-)$, where $y=0$
\begin{equation*}
Q_{x,y,z}=\left\{\begin{array}{lr}
	(\frac{x}{1-z},\frac{0}{1-z},0)=(1,0,0)&(+,+,-) \\
	(1,0,0)&(+,-,-) \\
\end{array}\right.
\end{equation*}
similarly to the previous case, in this line $x=1-z$.

We have now proven that:
\begin{lemma}
The map $r_2$ is a strong deformation retraction from $M_1\cap \RR^3$ to $S_1$.
\end{lemma}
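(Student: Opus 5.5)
The proof will check the three defining properties of a strong deformation retraction for $r_2(Q,t)=tP_Q+(1-t)Q$, where $P_Q$ is the point $Q_{(x,y,z)}$ given by the table. The properties are: continuity of $r_2$, $r_2(Q,t)\in M_1\cap\RR^3$ for all $t$, $r_2(\cdot,0)=\mathrm{id}$ with $r_2(\cdot,1)$ landing in $S_1$, and $r_2(Q,t)=Q$ for $Q\in S_1$. The strategy is to reduce everything to properties of the single map $\rho\colon M_1\cap\RR^3\to S_1$, $\rho(Q)=P_Q$. The homotopy is then just the straight-line homotopy between $\mathrm{id}$ and $\rho$ inside the plane $x+y+z=1$.

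First I will verify that $\rho$ is well defined and maps into $S_1$. On each of the seven closed regions the formula is clearly defined. On $(+,+,-)$ we have $z\le 0$, so $1-z\ge 1>0$, and $\frac{x}{1-z}+\frac{y}{1-z}=\frac{1-z}{1-z}=1$ with both entries nonnegative. Hence $P_Q\in S_1$ there, and the analogous statements hold for $(+,-,+)$ and $(-,+,+)$ by permuting coordinates. The vertices and the identity on $(+,+,+)$ are trivially in $S_1$.

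Each piece is continuous on its closed region, and the regions form a finite closed cover of the plane. So continuity of $\rho$ follows from the gluing lemma once agreement on all pairwise intersections is checked. The computation preceding the statement does this for the borders of $(+,+,-)$, and the remaining borders follow by the symmetry permuting $x,y,z$. I will also check the degenerate intersections. At the corner points, for instance $(1,0,0)$, which lies in $(+,+,+)$, $(+,-,-)$, $(+,+,-)$ and $(+,-,+)$, every formula gives $(1,0,0)$. The regions $(+,-,-)$ and $(-,+,-)$ meet only at such points, or along the line $z$-direction where both $x,y\le0$ is impossible with $x+y+z=1$, $z\le 0$. So no extra compatibility is needed beyond what has been verified.

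With $\rho$ continuous, the rest is routine. $r_2$ is continuous as a composition of continuous maps of $(Q,t)$. Since $Q$ and $P_Q$ both lie on the affine plane $x+y+z=1$, which is convex, every convex combination does too. We have $r_2(Q,0)=Q$ and $r_2(Q,1)=P_Q\in S_1$. For $Q\in S_1=(+,+,+)$, $P_Q=Q$, so $r_2(Q,t)=Q$ for all $t$. The main obstacle is the bookkeeping of overlaps in the gluing step, especially confirming that the pairwise intersections of non-adjacent regions reduce to the three vertices. I handle this by noting that on $M_1\cap\RR^3$ at most two coordinates can be nonpositive, and two of them vanish only at a vertex.
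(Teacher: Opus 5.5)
Your proposal is correct and follows essentially the paper's route: the piecewise target point $P_Q$, agreement on the borders of $(+,+,-)$ extended to the other regions by symmetry, the gluing lemma, and the straight-line homotopy in the plane $x+y+z=1$. You also make explicit two points the paper leaves implicit, namely that $P_Q\in S_1$ and that the homotopy stays in $M_1\cap\RR^3$ by convexity.

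One small imprecision: $(+,-,-)$ and $(-,+,-)$ do not meet at all, since their intersection would force $x=y=0$ and $z\le 0$, contradicting $x+y+z=1$. This only strengthens your conclusion that no further compatibility checks are needed.
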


Now that we have the retractions that allow us to retract $M_1$ into $S_1$, we will lift them in order to get a retraction from $M_d$ to $S_d$.

\begin{theorem}
	$S_d$ is a strong deformation retract of $M_d$.
\end{theorem}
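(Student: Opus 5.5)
The plan is to lift the two retractions of the degree-one case through the map
\[
\pi\colon M_d\to M_1,\qquad (x,y,z)\mapsto (x^d,y^d,z^d).
\]
This is a branched covering with deck group $\mu_d^3$, branched along the coordinate planes, and $S_d=\pi^{-1}(S_1)$ by definition.

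Let $R\colon M_1\times[0,1]\to M_1$ be the concatenation of $r_1$ and $r_2$. I read $r_{1t}(P)=P_r+i(1-t)P_i$, i.e.\ the straight-line homotopy from $P$ to $P_r$, which is what the lemma on $r_1$ actually uses. For $P'=(x,y,z)\in M_d$ I will define $\tilde R(P',t)$ coordinatewise. Write $u(t)$ for the first coordinate of $R(\pi(P'),t)$, so that $u(0)=x^d$. Then $\tilde x(t)$ is the unique continuous path with $\tilde x(t)^d=u(t)$ and $\tilde x(0)=x$, continued as $0$ from the first time $u$ vanishes. The same rule defines $\tilde y$ and $\tilde z$. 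Since $\pi\circ\tilde R=R\circ\pi$, the image stays in $M_d$. At the final time $\pi(\tilde R(P',1))\in S_1$, so $\tilde R(P',1)\in S_d$. Finally, $R$ is stationary on $S_1$, so $\tilde R$ is stationary on $S_d$.

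\textbf{Key combinatorial step.} I will check that every coordinate path $u(t)$ has the property: once it reaches $0$ it stays $0$.
\begin{itemize}
\item During $r_1$, $u(t)=u_r+i(1-t)u_i$ vanishes only if $u_r=0$. It then vanishes either for all $t$ (if $u_i=0$) or only at $t=1$.
\item During $r_2$, each coordinate moves linearly on the real line towards the corresponding coordinate of $Q_P$. From the table of $Q_P$, a negative coordinate moves monotonically to $0$, a positive one stays positive or becomes $0$ only at the end (in regions $(\pm,\mp,\mp)$), and a zero coordinate stays zero. For the last claim I check the boundary rows: for example, $x=0$ in region $(+,+,-)$ gives $Q_P=(0,1,0)$, and $x=0$ cannot occur in $(+,-,-)$.
\end{itemize}
Hence no coordinate ever leaves $0$, so no choice of branch is ever needed after passing through the branch locus. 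This makes the lift well defined, with a single value independent of any choices.

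\textbf{Main obstacle: joint continuity.} I expect the main difficulty to be continuity of $\tilde R$ in $(P',t)$, not just in $t$. Away from the zero set of $u$, continuity follows from the path-lifting property of the unbranched cover $z\mapsto z^d$ on $\CC^*$, applied to the continuous family of paths $R(\pi(P'),\cdot)$, which are continuous in $P'$ by the gluing lemma already used for $r_2$.

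Near a point where $u$ (or $x$) vanishes I will use the estimate $|\tilde x(t)|=|u(t)|^{1/d}$. This forces $\tilde x\to 0$ uniformly as $u\to 0$, so all nearby branches converge to the same value $0$. The delicate case is a nearby initial point with $u_r=0$ but $u_i\neq0$, whose path hits $0$ only at $t=1$. There the same estimate gives continuity at the junction time between $r_1$ and $r_2$.

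To make this rigorous I would, if necessary, write the lift explicitly. During $r_1$ the argument of $u(t)$ moves continuously inside a half-plane and never winds, so $\tilde x(t)=|u(t)|^{1/d}e^{i\theta(t)/d}\cdot\zeta$, with $\zeta$ fixed by $x$. During $r_2$ the argument of the lift is constant, since $u$ is real and does not change sign except by stopping at $0$.
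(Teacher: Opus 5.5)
Your proof is correct. It follows the paper's overall strategy, lifting the degree-one retractions coordinatewise through $\pi$, but it builds the lift by a different mechanism.

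The paper writes the lift down explicitly. For $r_1$ it uses that $\bar r(\cdot,t)$ preserves the closed upper and lower half-planes, and it conjugates by the sector homeomorphisms $f_k$, setting $G=f_k\circ\bar r\circ f_k^{-1}$ on each closed sector of angle $\pi/d$. For $r_2$ it moves each coordinate radially along its ray by explicit $d$-th root formulas, region by region, and then checks that the formulas agree on the overlaps.

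You instead lift the concatenated homotopy $R$ in one go by unique path lifting in $\CC^*$. Your observation that $0$ is absorbing for every coordinate path does two jobs. First, it makes the lift unique without any branch choices, so no overlap checks are needed. Second, it makes the local homotopy-lifting argument legitimate: $u(t_0)\neq 0$ forces $u\neq 0$ on all of $[0,t_0]$, hence on $U\times[0,t_0+\varepsilon]$ for some neighbourhood $U$. The bound $|\tilde x|=|u|^{1/d}$ then handles the branch locus.

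Your route buys a complete argument for joint continuity, which the paper does not spell out. It also gives $\mu_d^3$-equivariance, $\tilde R(\xi P',t)=\xi\,\tilde R(P',t)$, for free by uniqueness of lifts; this is exactly what is needed to descend to $\CC\PP^2\setminus C_d$ in the next theorem. The paper's route buys closed-form formulas for the retraction.

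One minor slip: during $r_2$ a positive coordinate never reaches $0$. In $(+,-,-)$ it moves to $1$, and in $(+,+,-)$ it moves to $x/(1-z)>0$. So your parenthetical about the regions $(\pm,\mp,\mp)$ is inaccurate, but the truth only strengthens the claim you need.
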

\begin{proof}
We want to produce the following diagram:
\begin{equation*}
	\begin{array}{ccccc}
		M_d & \rightleftharpoons & M_d\cap\{x^d, y^d, z^d\in\RR\}&\rightleftharpoons& S_d \\
		\downarrow & & \downarrow & & \downarrow\\
		M_1 & \rightleftharpoons & M_1\cap\RR^3 & \rightleftharpoons & S_1 \\
	\end{array}
\end{equation*}
where the horizontal arrows represent retractions, and the vertical arrows are the maps $(x,y,z)\mapsto (x^d,y^d,z^d)$. That is, we want to lift the retractions built for $M_1$ and $S_1$ to $M_d$ and $S_d$ via this branched cover.

We start seeing how the lifting of $r_1$ works. Since $r_1$ acts componentwise, we can write it as $r_1((x,y,z),t)=(\bar{r}(x,t),\bar{r}(y,t),\bar{r}(z,t))$ with $\bar{r}(x,t)+\bar{r}(y,t)+\bar{r}(z,t)=1$ whenever $(x,y,z)\in M_1$. Then, it suffices to find $G(x,t)$ such that $G(x,t)^d=\bar{r}(x^d,t)$.


In order to see this, we can write an explicit expression of $\bar{r}$. For a point $x\in\CC$ the action of $\bar{r}$ is the projection over the real axis, as seen in figure \ref{realprojection}.
\begin{figure}[h]
\begin{center}
\begin{tikzpicture}[scale=0.6]
	\draw (-5,0) to (5,0);
	\draw (0,5) to (0,-1);
	\draw[fill] (3,2) circle (2pt) node[above]{$x_r+ix_i$};
	\draw[dashed] (3,2) to (3,0);
\end{tikzpicture}
\end{center}
\caption{projection over the real axis}
\label{realprojection}
\end{figure}

we can then write the retraction as $\bar{r}(x,t)=m(x,t)e^{2i\pi\theta(x,t)}$ where $m(x,t)$ is the function indicating how the modulus of $\bar{r}(x,t)$ changes and $\theta(x,t)$ portrays the change of the argument.

For a $x=x_r+ix_i\in\CC$, $\bar{r}(x,t)=x_r+(1-t)ix_i$, we can write $m(x,t)=+\sqrt{x_r^2+(1-t)^2x_i^2}$ and $\theta(x,t)=\arctan\frac{(1-t)x_i}{x_r}$.

We can divide $\CC$ in $2d$ sectors of angle $\pi/d$ as in figure \ref{divisionC}, where each of the sections will be homeomorphic to either the upper half complex plane or to the lower half.

\begin{figure}[h]
\begin{center}
\begin{tikzpicture}[scale=0.6]
	\draw (-5,0) to (5,0);
	\draw (5,1.5) to (-5,-1.5);
	\draw (5,3.2) to (-5, -3.2);
	\draw (4.7,5) to (-4.7,-5);
	\draw (2.8,5) to (-2.8,-5);
	\draw (0.9,5) to (-0.9,-5);
	\draw (-0.9,5) to (0.9,-5);
	\draw (-2.8,5) to (2.8,-5);
	\draw (-4.7,5) to (4.7,-5);
	\draw (-5,3.2) to (5, -3.2);
	\draw (-5,1.5) to (5,-1.5);
\end{tikzpicture}
\end{center}
\caption{Division of the complex plane}
\label{divisionC}
\end{figure}

Then we can get homeomorphisms from the upper complex plane to the sectors of angle $[(k-1)\frac{\pi}{d}, k\frac{\pi}{d}]$ with $k=1,3,...,2d-1$, and from the lower complex plane to the sectors of angle $[(k-1)\frac{\pi}{d}, k\frac{\pi}{d}]$ with $k=2,4,...,2d$, which we will denote $f_k$. The lifting of $r_1$ will then be given by the $G$ previously described. This $G$ will act by taking the point $(x,y,z)\in M_d$ and mapping it to the upper or lower complex plane via the inverse homeomorphisms $f^{-1}_k$, then taking $r_1$ to take it to the real line, and finally bringing it back to its original sector with the homeomorphism $f_k$.

The homeomorphisms are given by
\begin{equation}
	f_k(|z|\exp^{i\theta})=|z|^\frac{1}{d}\exp^{i\frac{\theta}{d}+(k-1)i\frac{\pi}{d}}\text{ for }k=1,3,5,\dots,2d-1,
\end{equation}
\begin{equation}
	f_k(|z|\exp^{i\theta})=|z|^\frac{1}{d}\exp^{i\frac{\theta-\pi}{d}+(k-1)i\frac{\pi}{d}}\text{ for }k=2,4,6,\dots,2d
\end{equation}
We will also need the inverses of these homeomorphisms, which are:
\begin{equation}
	f_k^{-1}(|z|\exp^{i\theta})=|z|^d\exp^{id(\theta-(k-1)\frac{\pi}{d})}\text{ for }k=1,3,5,\dots,2d-1,
\end{equation}
\begin{equation}
	f_k^{-1}(|z|\exp^{i\theta})=|z|^d\exp^{id(\theta-\pi-(k-1)i\frac{\pi}{d})}\text{ for }k=2,4,6,\dots,2d
\end{equation}

Then, we define $G=f_k\circ\bar{r}\circ f_k^{-1}$, where $k$ depends on the point $z$ to which we will apply it. For $z\in\CC$, given by $z=|z|\exp^{i\theta}$ for $\theta\in[(k-1)\frac{\pi}{d}, k\frac{\pi}{d}]$, we have:
\begin{equation}
\begin{array}{ccc}
	G(z,t)&=&f_k(\bar{r}(f_k^{-1}(z),t))\\
	&=&f_k(\bar{r}(|z^d|\exp^{id(\theta-(k-1)\frac{\pi}{d})},t))\\
	&=&f_k(+\sqrt{(z^d)_r^2+(1-t)^2(z^d)_i^2}\exp^{i\arctan((1-t)\tan(d(\theta-(k-1)\frac{\pi}{d})))})\\
	&=&+\sqrt[2d]{(z^d)_r^2+(1-t)^2(z^d)_i^2}\exp^{\frac{i}{d}\arctan((1-t)\tan(d\theta)))}
\end{array}
\label{Gliftedr1}
\end{equation}

Then, we can see that $G(z,t)^d=\bar{r}(z^d,t)$, since:
\begin{equation}
\begin{array}{ccc}
	G(z,t)^d&=&m(z,t)^d\exp^{i\arctan((1-t)\tan d\theta)}\\
	||& & \\
	\bar{r}(z^d,t)&=&m(z^d,t)\exp^{i\arctan((1-t)\tan d\theta)}\\
\end{array}
\end{equation}
We can the write the lifting of $r_1$ as $\tilde{r}_1((x,y,z),t)=(G(x,t), G(y,t), G(z,t))$.

Lastly, we are going to see how to lift $r_2$ in order to get a retraction from $M_d\cap\{x^d, y^d, z^d\in\RR\}$ into $S_d$.
For a point $(x\xi_1,y\xi_2,z\xi_3)\in M_d\cap\{x^d, y^d, z^d\in\RR\}$ (where $(x,y,z)$ are real and $\xi_i$ are $d$-th roots of unity), the corresponding point in $M_1\cap\RR^3$ is $(x^ d,y^d,z^d)$ and therefore the goal points of the retraction $r_2$ will be
\begin{equation}
Q^d_{x,y,z}=\left\{\begin{array}{lr}
	(x^d,y^d,z^d)&(+,+,+) \\
	(1,0,0)&(+,-,-) \\
	(0,1,0)&(-,+,-) \\
	(0,0,1)&(-,-,+) \\
	(\frac{x^d}{1-z^d},\frac{y^d}{1-z^d},0)&(+,+,-) \\
	(\frac{x^d}{1-y^d},0,\frac{z^d}{1-y^d})&(+,-,+) \\
	(0,\frac{y^d}{1-x^d},\frac{z^d}{1-x^d})&(-,+,+) \\
\end{array}\right.
\end{equation}
In the $(+,+,+)$ case, $r_2$ is the constant path, and therefore the lift $\tilde{r}_2$ will also be the constant path.

In the $(-,-,+)$ case, $r_2((x^d,y^d,z^d), t)=(1-t)(x^d,y^d,z^d)+(0,0,t)$. Then, $\tilde{r}_2$ will start at $(x\xi_1,y\xi_2,z\xi_3)$, for $\xi_1$, $\xi_2$ and $\xi_3$ some $d-$th roots of unity. Since $x,y$ are negative real numbers, the retraction will push them to $0$, while we will push $z\xi_3$ to $\xi_3$. We can write this as
\[
\tilde{r}_2((x\xi_1,y\xi_2,z\xi_3), t)=(\sqrt[d]{(1-t)x^d}\xi_1,\sqrt[d]{(1-t)y^d}\xi_2,\sqrt[d]{(1-t)z^d+t}\xi_3),
\]
where the $d$'th roots take values in the corresponding sector of each coordinate. For the cases $(+,-,-)$ and $(-,+,-)$ it is equivalent.

Lastly, for the case $(+,+,-)$ and it's permutations, we can write the lifting as $\tilde{r}_2((x\xi_1,y\xi_2,z\xi_3),t)=(\sqrt[d]{(1-t+\frac{t}{1-z^d})x^d}\xi_1,\sqrt[d]{(1-t+\frac{t}{1-z^d})y^d}\xi_2,\sqrt[d]{(1-t)z^d}\xi_3)$. We can see by summing the coordinates to the power $d$ that they sum $1$, so the retraction doesn't leave $M_1$.

All that is left to see is that the formulas for each case coincide on the intersections of the regions. For the intersection of $(+,+,-)$ and $(-,+,-)$, we have that:
\begin{equation*}
	\sqrt[d]{(1-t+\frac{t}{1-z^d})y^d}=\sqrt[d]{(1-t+\frac{t}{y^d})y^d}=\sqrt[d]{y^d-ty^d+t}=\sqrt[d]{y^d(1-t)+t}
\end{equation*}
The first root corresponds to $\tilde{r}_2$ in $(+,+,-)$ and the last one corresponds to the lifting in $(-,+,-)$, so we see that it coincides. It would be similar in the rest of intersections.

We have then proved that $S_d$ is a strong deformation retract of $M_d$.
\end{proof}

\begin{theorem}
 The strong retraction from $M_d$ to $S_d$ induces a strong retraction, and therefore an homotopy equivalence, between $\CC\PP^2\setminus C_d$ and $K$.
\end{theorem}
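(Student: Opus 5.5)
The plan is to descend the retraction through the quotient map $\pi:M_d\to\CC\PP^2\setminus C_d$, $(x,y,z)\mapsto[x:y:z]$. First I will note that $\pi$ is surjective: for $[x:y:z]\notin C_d$ we have $x^d+y^d+z^d\neq 0$, so rescaling by a $d$-th root of $(x^d+y^d+z^d)^{-1}$ lands in $M_d$. The fibres of $\pi$ are exactly the orbits of $\mu_d=\{\xi:\xi^d=1\}$ acting diagonally, $(x,y,z)\mapsto(\xi x,\xi y,\xi z)$. Hence $\pi$ is the quotient map of a free action of a finite group, i.e.\ an unramified $d$-sheeted covering, so $\CC\PP^2\setminus C_d\cong M_d/\mu_d$. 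Under $\pi$ the set $S_d$, which is $\mu_d$-invariant because the conditions $x^d,y^d,z^d\in\RR_{\geq0}$ only involve $d$-th powers, maps onto $K$. This is the image of $S_d$ in the diagram, and I will read the definition of $K$ this way. Moreover $\pi^{-1}(K)=S_d$.

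The key step is to check that the homotopy $H:M_d\times[0,1]\to M_d$ from the previous theorem, namely $\tilde r_1$ followed by $\tilde r_2$ (concatenated in time), is $\mu_d$-equivariant: $H(\xi P,t)=\xi H(P,t)$. Then $H$ descends to $\bar H:(\CC\PP^2\setminus C_d)\times[0,1]\to\CC\PP^2\setminus C_d$ with $\bar H([P],t)=[H(P,t)]$. This map is well defined by equivariance. It is continuous because $\pi\times\mathrm{id}_{[0,1]}$ is a quotient map; it is open, being a covering times the identity. The properties of a strong deformation retraction then pass down directly. At $t=0$ the map is the identity, at $t=1$ the image lies in $\pi(S_d)=K$, and points of $K$ stay fixed because points of $S_d$ do. The homotopy equivalence follows.

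The main obstacle is the equivariance, since the lifts $G$ and $\tilde r_2$ were built sector by sector using branches of $d$-th roots. I will argue as follows. Both $\tilde r_1$ and $\tilde r_2$ act coordinatewise by maps of the form $w\mapsto g(w)$, where $g(w)^d$ depends only on $w^d$ (and on $t$, and for $\tilde r_2$ on the region determined by $(x^d,y^d,z^d)$, which is $\mu_d$-invariant). The branch of $g(w)$ is chosen in the same closed sector of angle $\pi/d$ as $w$, or as a fixed multiple of $w$'s root-of-unity factor $\xi_j$. Multiplying $w$ by $\xi=e^{2\pi i m/d}$ rotates the sector decomposition of Figure~\ref{divisionC} by $2m$ sectors. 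It preserves the parity of $k$, so it sends $f_k$ to $f_{k+2m}$, and one checks from the explicit formulas that $f_{k+2m}=\xi f_k$. Hence $G(\xi w,t)=\xi G(w,t)$, and likewise $\tilde r_2(\xi P,t)=\xi\tilde r_2(P,t)$, because the factors $\xi_j$ are simply multiplied by $\xi$. I expect some care at sector boundaries, where two formulas meet. There equivariance follows from the agreement on overlaps already established in the proof of the previous theorem.
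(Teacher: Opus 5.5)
Your proof is correct and takes the same route as the paper: both show that the lifted homotopy on $M_d$ is compatible with the diagonal $\mu_d$-action, so the path of a point of $\CC\PP^2\setminus C_d$ can be defined through any representative in $M_d$. Your version is cleaner on two counts. First, you state the correct relation $G(\xi w,t)=\xi G(w,t)$; the paper instead asserts $G(\lambda x,t)=G(x,t)$, an artifact of dropping the rotation factor $e^{i(k-1)\pi/d}$ from its closed formula for $G$, although the resulting projective class is the same. Second, you explicitly check continuity of the descended homotopy via the quotient map $\pi\times\mathrm{id}_{[0,1]}$, together with the endpoint and fixed-point conditions, all of which the paper leaves implicit.
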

\begin{proof}
	The path followed by each point in $\CC\PP^2\setminus C_d$ . in the retraction will be given by the path followed by a representative in $M_d$ as it is retracted to $S_d$. The class representatives we choose in $\CC\PP^2\setminus C_d$ will be such that the coordinates to the $d$ power add up to $1$. Since this choice is not unique, we are now going to see that if we apply the retraction $r=r_2\circ r_1$ to two such representatives, they are sent to the same class by $r$.

	Let $p=[x:y:z]\in\CC\PP^2\setminus C_d$ and $\lambda p=[\lambda x:\lambda y: \lambda z]$ be two representatives of some point $p$, since both representatives must be such that $x^d+y^d+z^d=1$, necessarily $\lambda$ is a $d-$th root of unity.

	First, we see how $r_1$ acts on these representatives.
	\begin{equation}
		\begin{array}{lll}
			\tilde{r}_1(p,t) & = & [G(x,t): G(y,t): G(z,t)] \\
			\tilde{r}_1(\lambda p,t) & = & [G(\lambda x,t): G(\lambda y,t): G(\lambda z,t)] \\
		\end{array}
	\end{equation}
	If we define $\theta(x,t)=\arctan\frac{(1-t)x_i}{x_r}$ for $x\in\CC$, we get that
	\begin{equation}
		\theta(\lambda x,t)=\arctan\frac{(1-t)\lambda x_i}{\lambda x_r}=\arctan\frac{(1-t)x_i}{x_r}=\theta(x,t)
	\end{equation}
	By \ref{Gliftedr1}:
	\begin{equation}
		\begin{array}{lll}
			G(x,t) & = & +\sqrt[2d]{(x^d)_r^2+(1-t)^2(x^d)_i^2}\exp^{\frac{i}{d}\arctan((1-t)\tan(d\theta)))}  \\
			G(\lambda x,t) & = & +\sqrt[2d]{(\lambda^d x^d)_r^2+(1-t)^2(\lambda^d x^d)_i^2}\exp^{\frac{i}{d}\arctan((1-t)\tan(d\theta)))} \\
		\end{array}
	\end{equation}
	Since $\lambda$ is a $d-$th root of unity, we get that $G(x,t)=G(\lambda x,t)$. Therefore, we get that $\tilde{r}_1(p,t)=\tilde{r}_1(\lambda p,t)$. It's just left to check that $r_2$ also sends them to the same class.

	For a point $(x,y,z)\in M_d\cap\RR^3$, $\tilde{r}_2$ is defined by the region of $M_1\cap\RR^3$ where $(x^d, y^d, z^d)$ is, and therefore, when passing to the projective, it will depend on where the point $p^d=[x^d: y^d: z^d]$ lies. Since we have that $\lambda$ must be a $d-$th root of unity, we get that both points $p^d$ and $(\lambda p)^d=p$ will lie in the same region.

	In the case that they lie in the regions $(-,-,+)$, $(-,+,-)$ or $(+,-,-)$, the retraction $\tilde{r}_2$ is, in the first case:
	\begin{equation}
		\tilde{r}_2([x\xi_1:y\xi_2:z\xi_3],t)=\left[\sqrt[d]{(1-t)x^d}\xi_1:\sqrt[d]{(1-t)y^d}\xi_2:\sqrt[d]{(1-t)z^d+t}\xi_3\right]
	\end{equation}
	Then if we apply it to $[\lambda x\xi_1:\lambda y\xi_2:\lambda z\xi_3]$, for $\lambda $ another root of unity, we get:
	\begin{equation}
		\tilde{r}_2([\lambda x\xi_1:\lambda y\xi_2:\lambda z\xi_3],t)=\left[\sqrt[d]{(1-t)(\lambda x)^d}\xi_1:\sqrt[d]{(1-t)(\lambda y)^d}\lambda y\xi_2:\sqrt[d]{(1-t)(\lambda z)^d+t}\xi_3\right]
	\end{equation}

	The rest of these cases would be equivalent.

	Next, for the cases $(+,+,-)$, $(+,-,+)$ or $(-,+,+)$, the retraction has the form, in the first case:
	\begin{equation}
		\tilde{r}_2([x\xi_1,y\xi_2,z\xi_3],t)=\left[\sqrt[d]{(1-t+\frac{t}{1-z^d})x^d}\xi_1:\sqrt[d]{(1-t+\frac{t}{1-z^d})y^d}\xi_2:\sqrt[d]{(1-t)z^d}\xi_3\right]
	\end{equation}
	Applying it to another representative of the same class, we get:
	\begin{equation}
	\begin{aligned}{}
		\tilde{r}_2([\lambda x\xi_1,\lambda y\xi_2,\lambda z\xi_3],t)& =
		 \left[h_d (x, z, t, \lambda) \xi_1: h_d (y, z, t, \lambda)\xi_2:\sqrt[d]{(1-t)(\lambda z)^d}\xi_3\right]\\
		& = \left[h_d (x, z,t,1) \xi_1: h_d (y, z,t,1)\xi_2:\sqrt[d]{(1-t)z^d}\xi_3\right],
	\end{aligned}
	\end{equation}
where
\[
h_d (x, z, t, \lambda):=
\sqrt[d]{\left(1-t+\frac{t}{1-(\lambda z)^d}\right)(\lambda x)^d}
\]

	We can then see that both representatives will go to the same point by the retraction.

	Lastly, if $[x^d: y^d: z^d]$ is in $(+,+,+)$, the retraction is the identity, and therefore both representatives will stay the same and therefore go to the same class.
\end{proof}

We have now seen that there is a homotopy equivalence between $\CC\PP^2\setminus C_d$ and $K$. Therefore, to get a CW-complex structure on $\CC\PP^2\setminus C_d$, it suffices to get it for $K$.

We denote as $\Delta$ the $\Delta-$complex $[v_0, v_1, v_2]$ that has $3$ vertices, $2$ edges and $1$ face, then, we can identify $S_1$ with $\Delta$, and we get a diagram:

\begin{equation*}
\begin{tikzcd}
\Delta \rar[leftrightarrow]& S_1&\\
\Delta^d\uar\rar[leftrightarrow]&S_d\uar\rar[hookrightarrow]\dar& M_d\dar\\
&K \rar[hookrightarrow]& \PP^2\setminus C_d,
\end{tikzcd}
\end{equation*}
where $\Delta^d$ is a $\Delta-$complex that we are trying to find and identify with $S_d$, equivalently as what we have done for $d=1$. Since $S_d$ is a ramified cover of $S_1$ by the power to the $d$ map, we want $\Delta^d$ to be a ramified cover of $\Delta$ by the same map.



First, we give a detailed description of $\Delta$. We will denote:
\begin{equation}
\begin{array}{lllll}
	v_0 & = & (1,0,0) & & \\
	v_1 & = & (0,1,0) & & \\
	v_2 & = & (0,0,1) & & \\
	l_0 & = & [v_1,v_2] & = & \{(1-t)v_1+tv_2|t\in[0,1]\}\\
	l_1 & = & [v_0,v_2] & = & \{(1-t)v_0+tv_2|t\in[0,1]\}\\
	l_2 & = & [v_0,v_1] & = & \{(1-t)v_0+tv_1|t\in[0,1]\}\\
	X & = & [v_0,v_1,v_2]
\end{array}
\end{equation}

The face maps of $X$, $l_0$, $l_1$ and $l_2$ are:
\begin{equation}
\begin{aligned}
	\delta^2_0(X) & =[v_1,v_2] = l_0 &&& \delta^1_0(l_0)&= v_1 &&& \delta^1_1(l_0) & = v_2\\
	\delta^2_1(X) & =[v_0,v_2] = l_1 &&& \delta^1_0(l_1)&= v_0 &&& \delta^1_1(l_1) & = v_2\\
	\delta^2_2(X) & =[v_0,v_1] = l_2 &&& \delta^1_0(l_2)&= v_0 &&& \delta^1_1(l_2) & = v_1,
\end{aligned}
\end{equation}
an the boundary maps are given by:
\begin{equation}
\partial^2 =\delta^2_2-\delta^2_1+\delta^2_0 \qquad
\partial^1 = \delta^2_1-\delta^2_0.
\label{bordermaps}
\end{equation}
We define now the $\Delta$-complex
$\Delta^d$. It has $3d$ $0-$simplices, $3d^2$ $1-$simplices and $d^3$ $2-$simplices.

Let $\mu_d=\{\xi| \xi^d=1\}$. We can express $S_d$ as a union
\begin{equation}
	\begin{array}{rll}
		S_d & = & \{(x,y,z)| x^d+y^d+z^d=1 \text{ and } x^d, y^d, z^d\in\RR_{\geq0}\} \\
		&  = & \displaystyle\bigcup_{(\xi_1, \xi_2, \xi_3)\in\mu^3_d} X_{(\xi_1, \xi_2, \xi_3)} \\
	\end{array}
\end{equation}
where $X_{(\xi_1, \xi_2, \xi_3)}=\{(x,y,z)| x^d+y^d+z^d=1 \text{ and } \xi_1x, \xi_2y, \xi_3z\geq0\}$.

Then, we can see that the faces of the simplex are given by $\mathring{X}_{(\xi_1, \xi_2, \xi_3)}$, while the edges are given by
\[
X_{(\xi_1, \xi_2, 1)}\cap\{z=0\},\  X_{(\xi_1, 1, \xi_2)}\cap\{y=0\}.\ X_{(1, \xi_1, \xi_2)}\cap\{x=0\},
\]
 and the vertices by
\[
X_{(\xi, 1, 1)}\cap\{y=0,z=0\},\ X_{(1, 1, \xi)}\cap\{x=0,y=0\},\ X_{(1, \xi, 1)}\cap\{x=0, z=0\}.
\]
We can remark that $X_{(\xi_1, \xi_2, \xi_3)}=(\xi_1, \xi_2, \xi_3)X_{(1,1,1)}$, and therefore that we have a group action of $\mu^3_d$ over $S_d$.
Let
\begin{equation}
\begin{array}{rll}
	L^z_{(1,1,1)} & = & X_{(1, 1, 1)}\cap\{z=0\}\\
	L^y_{(1,1,1)} & = & X_{(1, 1, 1)}\cap\{y=0\}\\
	L^x_{(1,1,1)} & = & X_{(1, 1, 1)}\cap\{x=0\}\\
	V^z_{(1,1,1)} & = & X_{(1, 1, 1)}\cap\{x=0, y=0\}\\
	V^y_{(1,1,1)} & = & X_{(1, 1, 1)}\cap\{x=0, z=0\}\\
	V^x_{(1,1,1)} & = & X_{(1, 1, 1)}\cap\{y=0, z=0\}\\
\end{array}
\end{equation}
We can describe the chain complexes. They are abelian groups but they are also $R_d$-complexes
where $R_d$ is the group algebra $\ZZ[\mu_d^3]$. For the $2$-chains we have
\[
C_2(\Delta^d) = R_d\langle X_{(1,1,1)}\rangle,
\]
which is a free principal $R_d$-module, i.e. a free abelian grouop of rank~$d^3$. For the $1$-chains we deal with non-free modules.
Let us denote as $t_x, t_y, t_z$ the variables of $R_d$; note that $t_x^d=t_y^d=t_z^d=1$.
Then
\[
C_1(\Delta^d) = \frac{R_d}{(t_x - 1)} \langle L^x_{(1,1,1)}\rangle\
\oplus\ \frac{R_d}{(t_y - 1)} \langle L^y_{(1,1,1)}\rangle
\oplus\ \frac{R_d}{(t_z - 1)} \langle L^z_{(1,1,1)}\rangle,
\]
i.e, a free abelian group of rank $d^2$. Finalyy the $0$-chains are
\[
C_0(\Delta^d) = \frac{R_d}{(t_y - 1, t_z - 1)} \langle V^x_{(1,1,1)}\rangle\
\oplus\ \frac{R_d}{(t_x - 1, t_z - 1 )} \langle V^y_{(1,1,1)}\rangle
\oplus\ \frac{R_d}{(t_x - 1, t_y - 1)} \langle V^z_{(1,1,1)}\rangle,
\]
a free abelian group of rank~$3d$.

The face maps are $R_d$-morphisms defined as
\begin{equation}
\begin{aligned}
	\delta^2_0(X_{(1,1,1)}) & =L^x_{(1,1,1)} &&& \delta^1_0(L^x_{(1,1,1)})&= V^y_{(1,1,1)} &&& \delta^1_1(L^x_{(1,1,1)}) & = V^z_{(1,1,1)}\\
	\delta^2_1(X_{(1,1,1)}) & =L^y_{(1,1,1)} &&& \delta^1_0(L^y_{(1,1,1)})&= V^x_{(1,1,1)} &&& \delta^1_1(L^y_{(1,1,1)}) & = V^z_{(1,1,1)}\\
	\delta^2_2(X_{(1,1,1)}) & =L^z_{(1,1,1)} &&& \delta^1_0(L^z_{(1,1,1)})&= V^x_{(1,1,1)} &&& \delta^1_1(L^z_{(1,1,1)}) & = V^y_{(1,1,1)}.
\end{aligned}
\end{equation}
%
The boundary maps are defines as in \eqref{bordermaps}.

We have then defined $\Delta^d$, the simplicial complex with which we can identify $S_d$. Our last step now is to complete the diagram with a simplicial complex associated to the projective space $K$, which we will call $\Delta^d_\textrm{proj}$.
\begin{equation*}
\begin{tikzcd}
\Delta \rar[leftrightarrow]& S_1&\\
\Delta^d\uar\rar[leftrightarrow]\dar&S_d\uar\rar[hookrightarrow]\dar& M_d\dar\\
\Delta^d_\textrm{proj}\rar[leftrightarrow]&K \rar[hookrightarrow]& \PP^2\setminus C_d,
\end{tikzcd}
\end{equation*}
The map $\Delta^d\longrightarrow\Delta^d_\text{proj}$ is given by $(x,y,z)\longrightarrow[x:y:z]$. Since we are passing to the projective plane, two points $(x\xi_1, y\xi_2, z\xi_3)$ and
The $2$-cells $X_{(\xi_1, \xi_2, \xi_3)}$ and $X_{(\xi_1\xi, \xi_2\xi, \xi_3\xi)}$ of $\Delta^d$, project the same $2$-cell of $\Delta^d_\textrm{proj}$, which will be denoted as $X_{[\xi_1: \xi_2: \xi_3]}$.
The same convention is followed for the edges and the vertices. Note that for $L^x_{[\xi_1: \xi_2: \xi_3]}$
the value of $\zeta_1$ is irrelevant; something similar happens for $L^y_{[\xi_1: \xi_2: \xi_3]}$
(the value of $\zeta_2$ is irrelevant), and for $L^z_{[\xi_1: \xi_2: \xi_3]}$
(the value of $\zeta_3$ is irrelevant).
For the vertices we have only three vertices $V^x, V^y, V^z$.

The group $\mu_d^3$ also acts, but the action of the diagonal vectors is trivial. Let us denote
\[
T_d:=\ZZ\left[\frac{\mu_d^3}{\mu_d\langle(\xi,\xi,\xi)\rangle}\right];
\]
we use variables $t_x,t_y,t_z$ as for $R_d$ and we have to add the relation $t_x t_y t_z=1$.
We have the following chain complexes:
\begin{align*}
C_2(\Delta^d_\text{proj}) &= T_d\langle X_{[1:1:1]}\rangle\\
C_1(\Delta^d_\text{proj}) &= \frac{T_d}{(t_x - 1)} \langle L^x_{[1:1:1]}\rangle\
\oplus\ \frac{T_d}{(t_y - 1)} \langle L^y_{[1:1:1]}\rangle
\oplus\ \frac{T_d}{(t_z - 1)} \langle L^z_{[1:1:1]}\rangle,\\
C_0(\Delta^d) &= \frac{T_d}{(t_y - 1, t_z - 1)} \langle V^x\rangle\
\oplus\ \frac{T_d}{(t_x - 1, t_z - 1 )} \langle V^y\rangle
\oplus\ \frac{T_d}{(t_x - 1, t_y - 1)} \langle V^z\rangle.
\end{align*}
%

\begin{example}{}
	We are going to compute now the simplicial complex for $S_d$ and $K$ when $d=2$,  this is, our curve will be $C_D=\{[x:y:z]\in\PP^2|x^2+y^2+z^2=0\}$, $S_2=\{(x,y,z)\in M_2| x^2, y^2, z^2\in\RR_{\geq 0}\}$ and $K=\{[x:y:z]\in\PP^2|x^2,y^2,z^2\in\RR_{\geq0}\}$.

	We start with $S_2$. First, we can observe that, from all points in $\CC$, the only ones whose square power is real are either the real numbers of the pure imaginary numbers. And from these, the later have negative square power. Therefore, we can conclude that $S_2=\{(x,y,z)\in\RR^3| x^2+y^2+z^2=1\}$, that is, $S_2$ is the real three dimensional sphere. We now see the simplicial complex $\Delta^2$ that can be associated to it such that $\phi(\Delta^2)=\Delta$.

	In this case, we have that $\mu_2=\{\xi|\xi^2=1\}=\{1,-1\}$. Then, we have that $X_{(1,1,1)}=\{(x,y,z)\in\RR^3_{\geq0} |x^2+y^2+z^2=1\}$, this is, $X_{(1,1,1)}$ is the set of points given by the positive square roots of the points in $S_1$. And the rest of $2-$simplices will be given by the action of $\mu^3_2$ on $X_{(1,1,1)}$, which will give us the $2-$simplices	$X_{(1,1,-1)}$, $X_{(1,-1,1)}$, $X_{(-1,1,1)}$, $X_{(1,-1,-1)}$, $X_{(-1,1,-1)}$, $X_{(-1,-1,1)}$ and $X_{(-1,-1,-1)}$.

	The face maps of $X_{(1,1,1)}$ are:
	\begin{equation}
		\begin{array}{lllll}
			\delta^2_0(X_{(1,1,1)}) & = & L^x_{(1,1,1)} & = & \{(0,\sqrt{1-t},\sqrt{t})|t\in[0,1]\}\\
			\delta^2_1(X_{(1,1,1)}) & = & L^y_{(1,1,1)} & = & \{(\sqrt{1-t},0, \sqrt{t})|t\in[0,1]\}\\
			\delta^2_2(X_{(1,1,1)}) & = & L^z_{(1,1,1)} & = & \{(\sqrt{1-t},\sqrt{t},0)|t\in[0,1]\}\\
		\end{array}
	\end{equation}

	By applying the group action as shown before we get the faces of the rest of $2-$simplices.

	For the $1-$simplices, the group acting would be $\mu^3_2/\mu_2$, since one of the  coordinates is $0$. Therefore, the $1-$simplices will be $L^x_{(1,1,1)}$, $L^x_{(1,1,-1)}$, $L^x_{(1,-1,1)}$, $L^x_{(1,-1,-1)}$, $L^y_{(1,1,1)}$, $L^y_{(1,1,-1)}$, $L^y_{(-1,1,1)}$, $L^y_{(-1,1,-1)}$, $L^z_{(1,1,1)}$, $L^z_{(1,-1,1)}$, $L^z_{(-1,1,1)}$ and $L^z_{(-1,-1,1)}$.

The face maps of $L^z_{(1,1,1)}$, $L^y_{(1,1,1)}$ and $L^x_{(1,1,1)}$ are:
\begin{equation}
\begin{array}{lllll}
	\delta^1_0(L^z_{(1,1,1)}) & = & V^y_{(1,1,1)} & = & (1,0,0)\\
	\delta^1_1(L^z_{(1,1,1)}) & = & V^z_{(1,1,1)} & = & (0,1,0)\\
	\delta^1_0(L^y_{(1,1,1)}) & = & V^x_{(1,1,1)} & = & (1,0,0)\\
	\delta^1_1(L^y_{(1,1,1)}) & = & V^z_{(1,1,1)} & = & (0,0,1)\\
	\delta^1_0(L^x_{(1,1,1)}) & = & V^x_{(1,1,1)} & = & (0,1,0)\\
	\delta^1_1(L^x_{(1,1,1)}) & = & V^y_{(1,1,1)} & = & (0,0,1)\\
\end{array}
\end{equation}

Lastly, the $0-$simplices will be $V^x_{(1,1,1)}=(1,0,0)$, $V^x_{(-1,1,1)}=(-1,0,0)$, $V^y_{(1,1,1)}=(0,1,0)$, $V^y_{(1,-1,1)}=(0,-1,0)$, $V^z_{(1,1,1)}=(0,0,1)$ and $V^z_{(1,1,-1)}=(0,0,-1)$. In Figure \ref{fig:S2} we can see the  $\Delta-$complex that we associate to $S_2$.



	  \begin{figure}[!hp]
		\begin{center}
		\begin{tikzpicture}[scale=0.7]
			\draw[gray, thin] (-5,0) to (5,0);
			\draw[gray, thin] (0,-5) to (0,5);

			\draw[gray, thin] (-4,-4) to (4,4);
			\draw[fill, red] (4,0) circle (2pt);
			\draw[fill, red] (-4,0) circle (2pt);
			\draw[fill, green] (0,4) circle (2pt);
			\draw[fill, green] (0,-4) circle (2pt);
			\draw[fill, blue] (1.41,1.41) circle (2pt);
			\draw[fill, blue] (-1.41,-1.41) circle (2pt);

			\draw (0,0) circle (4cm);

			\draw (-4,0) arc (180:360:4 and 1.5);
			\draw[dashed] (4,0) arc (0:180:4 and 1.5);
			\draw[dashed] (0,4) arc (90:270:-1.5 and 4);
			\draw (0,-4) arc (270:90:1.5 and 4);

			\draw[pink, thick](-4,0.02) arc (180:250:4 and 1.5);
			\draw[pink, thick](-4.02,0.02) arc (180:90:4);
			\draw[pink, thick](0,4) arc (270:90:1.5 and -4);

			\draw[purple, thick](-4,-0.02) arc (180:250:4 and 1.5);
			\draw[purple, thick](-4,-0.02) arc (180: 270:4);
			\draw[purple, thick] (-1.43,-1.41) arc (200:270:1.5 and 3.9);

			\draw[yellow, thick, dashed](-3.98,0.02) arc (180:70:4 and 1.5);
			\draw[yellow, thick, dashed](-3.98,0.02) arc (180: 90:4);
			\draw[yellow, thick, dashed] (1.39,1.41) arc (200:270:-1.5 and -3.9);

			\draw[orange, thick, dashed](-3.98,-0.02) arc (180:70:4 and 1.5);
			\draw[orange, thick, dashed](-3.98,-0.02) arc (180: 270:4);
			\draw[orange, thick, dashed] (1.39,1.41) arc (160:270:-1.5 and 4);

			\draw[teal, thick, dashed](4.02,0.02) arc (180:110:-4 and 1.5);
			\draw[teal, thick, dashed](4.02,0.02) arc (180: 270:-4);
			\draw[teal, thick, dashed] (1.42,1.41) arc (200:270:-1.5 and -3.9);

			\draw[violet, thick](4.02,-0.02) arc (0:110:4 and -1.5);
			\draw[violet, thick](4.02,-0.02) arc (180: 90:-4);
			\draw[violet, thick] (-1.39,-1.42) arc (200:270:1.5 and 3.9);

			\draw[magenta, thick, dashed](3.98,0.02) arc (180:110:-4 and 1.5);
			\draw[magenta, thick, dashed](3.98,-0.02) arc (180: 90:-4);
			\draw[magenta, thick, dashed] (1.43,1.41) arc (160:270:-1.5 and 4);

			\draw[lime, thick, dashed](4.02,0.02) arc (0:110:4 and -1.5);
			\draw[lime, thick, dashed](3.98,0.02) arc (180: 270:-4);
			\draw[lime, thick, dashed] (0.02,3.98) arc (270:160:1.5 and -4);
		\end{tikzpicture}
		\end{center}
		\caption{Division in simplices of $S_2$}
		\label{fig:S2}
	\end{figure}

Now, we give a description of $\Delta^2_\text{proj}$. First, for the $0-$simplices, we have that $V^x_{[1,1,1]}=V^x_{[-1,1,1]}=[1:0:0]$, $V^y_{[1,1,1]}=V^y_{[1,-1,1]}=[0:1:0]$ and $V^z_{[1,1,1]}=V^z_{[1,1,-1]}=[0:0:1]$.

We will have $6$ $1-$simplices, obtained from the $12$ of $\Delta^d$ by identifying: $L^x_{[1:1:1]}=L^x{[1:-1:-1]}$, $L^x_{[1:-1:1]}=L^x{[1:1:-1]}$, $L^y_{[1:1:1]}=L^y{[1:-1:-1]}$, $L^y_{[1:-1:1]}=L^y{[1:1:-1]}$, $L^z_{[1:1:1]}=L^z{[1:-1:-1]}$ and $L^z_{[1:-1:1]}=L^z{[1:1:-1]}$.

Lastly, we will have $4$ $2-$simplices, $X_{[1:1:1]}=X_{[-1:-1:-1]}$, $X_{[1:1:-1]}=X_{[-1:-1:1]}$, $X_{[1:-1:1]}=X_{[-1:1:-1]}$ and $X_{[-1:1:1]}=X_{[1:-1:-1]}$.

The face maps will be obtained by passing to the projective the face maps of $\Delta^d$.
\end{example}

\end{document}